\newtheorem{thm}{Theorem}[section]
\newtheorem{lem}[thm]{Lemma}
\theoremstyle{definition}
\theoremstyle{remark}
\theoremstyle{plain}
\theoremstyle{remark}
\newtheorem*{example}{Example}
\numberwithin{equation}{section}
\begin{document}

\title{On a conjecture concerning  enumeration of $ 2n\times k $ n-times  Persymmetric  Matrices over $\mathbb{F}_{2} $ by rank}
\author{Jorgen~Cherly}
\address{D\'epartement de Math\'ematiques, Universit\'e de
    Brest, 29238 Brest cedex~3, France}
\email{Jorgen.Cherly@univ-brest.fr}
\email{andersen69@wanadoo.fr}
\maketitle 

\begin{abstract}
Dans cet article nous annon\c{c}ons une conjecture concernant l'\' enum\' eration de $2n\times k$  n- fois matrices persym\' etriques sur  $\mathbb{F}_{2}$ par le rang.
 \end{abstract}

\selectlanguage{english}

\begin{abstract}
In this paper we announce a conjecture concerning enumeration of $2n\times k $ n-times  persymmetric  matrices over $\mathbb{F}_{2}$ by rank.
 \end{abstract}
\maketitle 
\newpage
\tableofcontents
\newpage

   \section{Introduction.}
  \label{sec 1}  
  In this paper we propose a formula to compute  the number $ \Gamma_{i}^{\left[2\atop{\vdots \atop 2}\right]\times k}$  of rank i  $2n\times k $
   n-times persymmetric matrices over $\mathbb{F}_{2}$ of the below form for $0\leqslant i\leqslant \inf(2n,k) $  \\
  \begin{equation}
  \label{eq 1.1}
   \left (  \begin{array} {cccccccc}
\alpha  _{1}^{(1)} & \alpha  _{2}^{(1)}  &   \alpha_{3}^{(1)} &   \alpha_{4}^{(1)} &   \alpha_{5}^{(1)} &  \alpha_{6}^{(1)}  & \ldots  &  \alpha_{k}^{(1)} \\
\alpha  _{2}^{(1)} & \alpha  _{3}^{(1)}  &   \alpha_{4}^{(1)} &   \alpha_{5}^{(1)} &   \alpha_{6}^{(1)} &  \alpha_{7}^{(1)} & \ldots  &  \alpha_{k+1}^{(1)} \\ 
\hline \\
\alpha  _{1}^{(2)} & \alpha  _{2}^{(2)}  &   \alpha_{3}^{(2)} &   \alpha_{4}^{(2)} &   \alpha_{5}^{(2)} &  \alpha_{6}^{(2)} & \ldots   &  \alpha_{k}^{(2)} \\
\alpha  _{2}^{(2)} & \alpha  _{3}^{(2)}  &   \alpha_{4}^{(2)} &   \alpha_{5}^{(2)}&   \alpha_{6}^{(2)} &  \alpha_{7}^{(2)}  & \ldots  &  \alpha_{k+1}^{(2)} \\ 
\hline\\
\alpha  _{1}^{(3)} & \alpha  _{2}^{(3)}  &   \alpha_{3}^{(3)}  &   \alpha_{4}^{(3)} &   \alpha_{5}^{(3)} &  \alpha_{6}^{(3)} & \ldots  &  \alpha_{k}^{(3)} \\
\alpha  _{2}^{(3)} & \alpha  _{3}^{(3)}  &   \alpha_{4}^{(3)}&   \alpha_{5}^{(3)} &   \alpha_{6}^{(3)}  &  \alpha_{7}^{(3)} & \ldots  &  \alpha_{k+1}^{(3)} \\ 
\hline \\
\vdots & \vdots & \vdots  & \vdots  & \vdots & \vdots  & \vdots & \vdots \\
\hline \\
\alpha  _{1}^{(n)} & \alpha  _{2}^{(n)}  &   \alpha_{3}^{(n)} &   \alpha_{4}^{(n)} &   \alpha_{5}^{(n)}  &  \alpha_{6}^{(n)} & \ldots  &  \alpha_{k}^{(n)} \\
\alpha  _{2}^{(n)} & \alpha  _{3}^{(n)}  &   \alpha_{4}^{(n)}&   \alpha_{5}^{(n)} &   \alpha_{6}^{(n)}  &  \alpha_{7}^{(n)} & \ldots  &  \alpha_{k+1}^{(n)} \\ 
\end{array} \right ).  
\end{equation} 
We remark that the  n-times  persymmetric matrice above is the most simple case of all  n-times  persymmetric matrices over $\mathbb{F}_{2}$\\
More precisely we postulate that : \\

    \begin{equation}
\label{eq 1.2}  
   \displaystyle \Gamma_{i}^{\left[2\atop{\vdots \atop 2}\right]\times k}  = (2^{i+1}-1)\cdot 2^{in} + \sum_{j=0}^{i-1}a_{j}^{(i)}(k)\cdot2^{jn}
      \end{equation}
     \begin{equation*}  
  \displaystyle  = (2^{n}-1)(2^{n}-2)\ldots (2^{n}-2^{E(\frac{i-1}{2})})\cdot \big[(2^{i+1}-1)\cdot 2^{(i-1-E(\frac {i-1}{2}))n}
 + \sum_{j=0}^{i-2-E(\frac{i-1}{2})} \alpha_{j}^{(i)}(k)\cdot 2^{jn}\big]     \quad \text{if} \quad  i < \inf (2n,k).  \\   
     \end{equation*}   
   \begin{equation}
\label{eq 1.3}  
   \displaystyle \Gamma_{i}^{\left[2\atop{\vdots \atop 2}\right]\times k}  =  \sum_{j=0}^{E(\frac{i}{2})}b_{j}^{(i)}(n)\cdot2^{jk}
    \quad \text{if} \quad  i \leqslant \inf (2n,k).  
     \end{equation}
      \begin{equation}
\label{eq 1.4}  
   \displaystyle  \Gamma_{2n}^{\left[2\atop{\vdots \atop 2}\right]\times k}= 2^{n}\prod_{j=1}^{j=n}(2^k-2^{2n-j})  \quad \text{if} \quad  k\geqslant 2n. \\
   \end{equation}
        \begin{equation}
\label{eq 1.5}  
 \displaystyle  \Gamma_{k}^{\left[2\atop{\vdots \atop 2}\right]\times k}= 2^{n(k+1)}-\sum_{i=0}^{i=k-1} \Gamma_{i}^{\left[2\atop{\vdots \atop 2}\right]\times k}  \quad \text{if} \quad k< 2n.
   \end{equation}\\

        \begin{equation}
\label{eq 1.6}  
 \displaystyle a_{i-1}^{(i)}(k)= a(i)\cdot2^k+b(i)\text{ where }   a(i),b(i) \in \mathbb{Q}.
   \end{equation}

   \section{Study concerning the formula \eqref{eq 1.2} }
  \label{sec 2}  
\subsection{Justification of the formula  \eqref{eq 1.2} in the case n=1}
\label{subsec 2.1}
We consider the following   $2\times k $  persymmetric  matrice over $\mathbb{F}_{2}$.\\
   \begin{equation}
  \label{eq 2.1}
   \left (  \begin{array} {cccccccc}
\alpha  _{1}^{(1)} & \alpha  _{2}^{(1)}  &   \alpha_{3}^{(1)} &   \alpha_{4}^{(1)} &   \alpha_{5}^{(1)} &  \alpha_{6}^{(1)}  & \ldots  &  \alpha_{k}^{(1)} \\
\alpha  _{2}^{(1)} & \alpha  _{3}^{(1)}  &   \alpha_{4}^{(1)} &   \alpha_{5}^{(1)} &   \alpha_{6}^{(1)} &  \alpha_{7}^{(1)} & \ldots  &  \alpha_{k+1}^{(1)} 
\end{array} \right ).  
\end{equation} 
We obtain from Daykin [3] or Cherly [4] \\
\begin{equation}
\label{eq 2.2}
   \Gamma_{i}^{2\times k}=   \begin{cases}
 1 & \text{if  } i = 0,  \\  
3 & \text{if  } i = 1,   \\
2^{k+1}-4 & \text{if  } i = 2.\\
 \end{cases}
   \end{equation} 
From the formula  \eqref{eq 1.2} with n=1, i=1 and $ k> 1 $  using \eqref{eq 2.2} we obtain :\\
\begin{equation*}
 \displaystyle  \Gamma_{1}^{2\times k} =  (2^{2}-1)\cdot 2 + a_{0}^{(1)}(k) =   6+a_{0}^{(1)}(k) = 3,\quad  a_{0}^{(1)}(k) = -3.
   \end{equation*}
\subsection{Justification of the formula  \eqref{eq 1.2} in the case n=2}
\label{subsec 2.2}
We consider the following   $4\times k $ double  persymmetric  matrice over $\mathbb{F}_{2}$.\\
  \begin{equation*}
 \left (  \begin{array} {cccccccc}
\alpha  _{1}^{(1)} & \alpha  _{2}^{(1)}  &   \alpha_{3}^{(1)} &   \alpha_{4}^{(1)} &   \alpha_{5}^{(1)} &  \alpha_{6}^{(1)}  & \ldots  &  \alpha_{k}^{(1)} \\
\alpha  _{2}^{(1)} & \alpha  _{3}^{(1)}  &   \alpha_{4}^{(1)} &   \alpha_{5}^{(1)} &   \alpha_{6}^{(1)} &  \alpha_{7}^{(1)} & \ldots  &  \alpha_{k+1}^{(1)} \\ 
\hline \\
\alpha  _{1}^{(2)} & \alpha  _{2}^{(2)}  &   \alpha_{3}^{(2)} &   \alpha_{4}^{(2)} &   \alpha_{5}^{(2)} &  \alpha_{6}^{(2)} & \ldots   &  \alpha_{k}^{(2)} \\
\alpha  _{2}^{(2)} & \alpha  _{3}^{(2)}  &   \alpha_{4}^{(2)} &   \alpha_{5}^{(2)}&   \alpha_{6}^{(2)} &  \alpha_{7}^{(2)}  & \ldots  &  \alpha_{k+1}^{(2)} \\ 
\end{array} \right ).  
\end{equation*} 
We obtain from Cherly [5] \\
\begin{equation}
\label{eq 2.3}
\Gamma_{i}^{\left[2\atop 2\right]\times k} =   \begin{cases}
 1 & \text{if  } i = 0,  \\  
9 & \text{if  } i = 1,   \\
3\cdot 2^{k+1}+30 & \text{if  } i = 2,\\
21\cdot 2^{k+1}-168 & \text{if  } i = 3,\\
2^{2k+2} -3\cdot 2^{k+4}+128 & \text{if  } i = 4.
 \end{cases}
   \end{equation} 
From the formula  \eqref{eq 1.2} with n=2, i=1 and $ k> 1 $  using \eqref{eq 2.3} we obtain :\\
\begin{equation*}
 \displaystyle  \Gamma_{1}^{\left[2\atop 2\right]\times k}  =  (2^{2}-1)\cdot 2^2 + a_{0}^{(1)}(k) =   12+a_{0}^{(1)}(k) = 12-3=9.
   \end{equation*}
From the formula  \eqref{eq 1.2} with n=2, i=2 and $ k> 2 $  using \eqref{eq 2.3}  we obtain :\\
\begin{equation*}
 \displaystyle  \Gamma_{2}^{\left[2\atop 2\right]\times k}  =  (2^{3}-1)\cdot 2^4 + a_{0}^{(2)}(k) +4\cdot a_{1}^{(2)}(k)  =   3\cdot 2^{k+1}+30. 
   \end{equation*}
From \eqref{eq 1.2} we deduce :\\
\begin{equation}
\label{eq 2.4}
 a_{j}^{(2)}(k)    =   \begin{cases}
-2^{k+1}+18   & \text{if  } j = 0,  \\  
2^{k+1}-25  & \text{if  } j = 1.  \\
 \end{cases}
 \end{equation} 
From the formula  \eqref{eq 1.2} with n=2, i=3 and $ k> 3 $  using \eqref{eq 2.3}  we obtain :\\
\begin{equation*}
 \displaystyle  \Gamma_{3}^{\left[2\atop 2\right]\times k}  =  (2^{4}-1)\cdot 2^6 + a_{0}^{(3)}(k) +4\cdot a_{1}^{(3)}(k) + 2^4\cdot a_{2}^{(3)}(k)=   21\cdot 2^{k+1}- 168.
   \end{equation*}
From \eqref{eq 1.2} we deduce :\\
\begin{equation}
\label{eq 2.5}
 a_{j}^{(3)}(k)    =   \begin{cases}
14\cdot2^{k}-176   & \text{if  } j = 0,  \\  
-21\cdot2^{k}+294  & \text{if  } j = 1,   \\
7\cdot2^{k}-133 & \text{if  } j = 2.
 \end{cases}
   \end{equation} 

\subsection{Justification of the formula  \eqref{eq 1.2} in the case n=3}
\label{subsec 2.3}
We consider the following   $6\times k $ triple persymmetric  matrice over $\mathbb{F}_{2}$\\
  \begin{equation*}
 \left (  \begin{array} {cccccccc}
\alpha  _{1}^{(1)} & \alpha  _{2}^{(1)}  &   \alpha_{3}^{(1)} &   \alpha_{4}^{(1)} &   \alpha_{5}^{(1)} &  \alpha_{6}^{(1)}  & \ldots  &  \alpha_{k}^{(1)} \\
\alpha  _{2}^{(1)} & \alpha  _{3}^{(1)}  &   \alpha_{4}^{(1)} &   \alpha_{5}^{(1)} &   \alpha_{6}^{(1)} &  \alpha_{7}^{(1)} & \ldots  &  \alpha_{k+1}^{(1)} \\ 
\hline \\
\alpha  _{1}^{(2)} & \alpha  _{2}^{(2)}  &   \alpha_{3}^{(2)} &   \alpha_{4}^{(2)} &   \alpha_{5}^{(2)} &  \alpha_{6}^{(2)} & \ldots   &  \alpha_{k}^{(2)} \\
\alpha  _{2}^{(2)} & \alpha  _{3}^{(2)}  &   \alpha_{4}^{(2)} &   \alpha_{5}^{(2)}&   \alpha_{6}^{(2)} &  \alpha_{7}^{(2)}  & \ldots  &  \alpha_{k+1}^{(2)} \\ 
\hline\\
\alpha  _{1}^{(3)} & \alpha  _{2}^{(3)}  &   \alpha_{3}^{(3)}  &   \alpha_{4}^{(3)} &   \alpha_{5}^{(3)} &  \alpha_{6}^{(3)} & \ldots  &  \alpha_{k}^{(3)} \\
\alpha  _{2}^{(3)} & \alpha  _{3}^{(3)}  &   \alpha_{4}^{(3)}&   \alpha_{5}^{(3)} &   \alpha_{6}^{(3)}  &  \alpha_{7}^{(3)} & \ldots  &  \alpha_{k+1}^{(3)} \end{array} \right ).  
\end{equation*} 
We obtain from Cherly [6] \\
\begin{equation}
\label{eq 2.6}
\Gamma_{i}^{\left[2\atop {2\atop 2}\right]\times k} =   \begin{cases}
1  &\text{if  }  i = 0, \\
21   &\text{if  }  i = 1, \\
7\cdot2^{k+ 1} + 266  & \text{if   } i = 2, \\
147\cdot2^{k+ 1} + 1344   & \text{if   } i = 3, \\
7\cdot2^{2k+2} + 651\cdot2^{k+ 2}  - 22624 & \text{if   } i = 4, \\
105\cdot2^{2k+2} - 315\cdot2^{k+ 5} + 53760   & \text{if   } i = 5, \\
2^{3k+3} - 7\cdot2^{2k+6} + 7\cdot2^{k+ 10} - 32768  & \text{if  } i = 6,\;k\geq 6.
 \end{cases}
   \end{equation} 
From the formula  \eqref{eq 1.2} with n=3, i=1 and $ k> 1 $  using \eqref{eq 2.4} we obtain :\\
\begin{equation*}
 \displaystyle  \Gamma_{1}^{\left[2\atop{ 2\atop 2}\right]\times k}  =  (2^{2}-1)\cdot 2^3 + a_{0}^{(1)}(k) =   24+a_{0}^{(1)}(k) = 24-3=21.
   \end{equation*}
  
  From the formula  \eqref{eq 1.2} with n=3, i=2 and $ k> 2 $  using \eqref{eq 2.4}  we obtain :\\
\begin{equation*}
 \displaystyle  \Gamma_{2}^{\left[2\atop {2\atop 2}\right]\times k}  =  (2^{3}-1)\cdot 2^6 + a_{0}^{(2)}(k) +8\cdot a_{1}^{(2)}(k)  =  
 7\cdot 64 +(-2^{k+1}+18) +8\cdot (2^{k+1}-25 ) =7\cdot2^{k+ 1} + 266. 
   \end{equation*}

From the formula  \eqref{eq 1.2} with n=3, i=3 and $ k> 3 $  using \eqref{eq 2.5}  we obtain :\\
\begin{eqnarray*}
 \displaystyle  \Gamma_{3}^{\left[2\atop {2\atop 2}\right]\times k} & = & (2^{4}-1)\cdot 2^9 + a_{0}^{(3)}(k) +8\cdot a_{1}^{(3)}(k) + 2^6\cdot a_{2}^{(3)}(k)\\
   &= & 15\cdot 512+  (14\cdot2^{k}-176 ) +8\cdot( -21\cdot2^{k}+294) +64\cdot(7\cdot2^{k}-133)\\
 &  = &147\cdot 2^{k+1}+1344.
\end{eqnarray*}

From the formula  \eqref{eq 1.2} with n=3, i=4 and $ k> 4 $ using \eqref{eq 2.6}  we obtain :\\
\begin{eqnarray*}
 \displaystyle  \Gamma_{4}^{\left[2\atop {2\atop 2}\right]\times k} & = & (2^{5}-1)\cdot 2^{12} + a_{0}^{(4)}(k) +2^3\cdot a_{1}^{(4)}(k) + 2^6\cdot a_{2}^{(4)}(k)+  2^{9}\cdot a_{3}^{(4)}(k) \\
 & = & 7\cdot2^{2k+2} + 651\cdot2^{k+ 2}  - 22624 \\
  &= & 31\cdot 2^{12}+  \frac{2^{2k+2}-117\cdot 2^{k+2}+9440}{3}  +2^3\cdot(-2^{2k+1}+269\cdot2^{k}-5744 )\\
& + & 2^6\cdot( \frac{2^{2k+2}-783\cdot2^{k}+19028}{6})+2^{9}\cdot( \frac{35\cdot2^{k}-1210}{2}  ).
  \end{eqnarray*}
  Thus we have :\\
  
  \begin{equation}
\label{eq 2.7}
 a_{j}^{(4)}(k)    =   \begin{cases}
  \frac{2^{2k+2}-117\cdot 2^{k+2}+9440}{3}  & \text{if  } j = 0,  \\  
 -2^{2k+1}+269\cdot2^{k}-5744  & \text{if  } j = 1,   \\
  \frac{2^{2k+2}-783\cdot2^{k}+19028}{6} & \text{if  } j = 2,\\
   \frac{35\cdot2^{k}-1210}{2}  & \text{if  } j = 3.
 \end{cases}
   \end{equation} 
  From the formula  \eqref{eq 1.2} with n=3, i=5 and $ k> 5 $ using \eqref{eq 2.6}  we obtain :\\
\begin{eqnarray*}
 \displaystyle  \Gamma_{5}^{\left[2\atop {2\atop 2}\right]\times k} & =& (2^{6}-1)\cdot 2^{15} + \sum_{i=0}^{4}a_{i}^{(5)}(k)\cdot2^{3i} \\
&  = & (2^{3}-1)(2^{3}-2) (2^{3}-2^{2})\cdot \big[(2^{6}-1)\cdot 2^{6} 
 +  \alpha_{0}^{(5)}(k)+ \alpha_{1}^{(5)}(k)\cdot 2^{3}\big]      \\  
 &=&168\cdot \big[4032 +  \alpha_{0}^{(5)}(k)+ \alpha_{1}^{(5)}(k)\cdot 2^{3}\big]         \\  
 &=& 63\cdot2^{15}+2^{12}\cdot\big[ \alpha_{1}^{(5)}(k)-441\big]+2^9\cdot\big[ \alpha_{0}^{(5)}(k)-7\cdot \alpha_{1}^{(5)}(k)+882\big]\\
 &+&2^6\cdot\big[-7\cdot \alpha_{0}^{(5)}(k)+14\cdot \alpha_{1}^{(5)}(k)-1008\big]  \\
 &+& 2^3\cdot\big[14\cdot \alpha_{0}^{(5)}(k)-8\cdot \alpha_{1}^{(5)}(k)\big]-8\cdot \alpha_{0}^{(5)}(k) \\
 & = &105\cdot 2^{2k+2} - 315\cdot 2^{k+ 5} + 53760.  
  \end{eqnarray*}
We then obtain :\\
\begin{equation}
\label{eq 2.8}
 \alpha_{0}^{(5)}(k)+8\cdot \alpha_{1}^{(5)}(k)=\frac{5}{2}\cdot2^{2k} -60\cdot2^{k}  - 3712.
\end{equation}
and \\
  \begin{equation}
\label{eq 2.9}
 a_{i}^{(5)}(k)    =   \begin{cases}
 -8\cdot \alpha_{0}^{(5)}(k)   & \text{if  } i = 0,  \\  
 14\cdot \alpha_{0}^{(5)}(k)-8\cdot \alpha_{1}^{(5)}(k)  & \text{if  } i = 1,    \\
 -7\cdot \alpha_{0}^{(5)}(k)+14\cdot \alpha_{1}^{(5)}(k)-1008 & \text{if  } i = 2,  \\
   \alpha_{0}^{(5)}(k)-7\cdot \alpha_{1}^{(5)}(k)+882  & \text{if  } i = 3, \\
   \alpha_{1}^{(5)}(k)-441  & \text{if  } i= 4. 
 \end{cases}
   \end{equation}

We shall need the following result from Theorem 5.2 in Cherly [12] \\
\begin{equation}
\label{eq 2.10}
 \Gamma_{5}^{\left[2\atop {2\atop {2\atop2 }}\right]\times k}= 6300 \cdot (2^{2k}+100 \cdot2^{k} -1856).
\end{equation}
   
  From the formula  \eqref{eq 1.2} with n=4, i=5 and $ k> 5 $ using \eqref{eq 2.10}  we obtain :\\
\begin{eqnarray*}
 \displaystyle  \Gamma_{5}^{\left[2\atop {2\atop{ 2\atop 2}}\right]\times k} & = & (2^{4}-1)(2^{4}-2) (2^{4}-2^{2})\cdot \big[(2^{6}-1)\cdot 2^{8}
 +  \alpha_{0}^{(5)}(k)+ \alpha_{1}^{(5)}(k)\cdot 2^{4}\big]      \\  
 &=&2520\cdot \big[16128 +  \alpha_{0}^{(5)}(k)+ \alpha_{1}^{(5)}(k)\cdot 2^{4}\big]      \\  
 & = & 6300 \cdot (2^{2k}+100 \cdot2^{k} -1856).
  \end{eqnarray*}
We then obtain :\\
\begin{equation}
\label{eq 2.11}
 \alpha_{0}^{(5)}(k)+16\cdot \alpha_{1}^{(5)}(k)=\frac{5}{2}\cdot2^{2k} +250\cdot2^{k}  - 20768.
\end{equation}
From \eqref{eq 2.8} and \eqref{eq 2.11} we get :\\
  \begin{equation}
\label{eq 2.12}
  \alpha_{j}^{(5)}(k)   =   \begin{cases}
  \frac{5}{2}\cdot2^{2k}-370\cdot2^k+13344  & \text{if  } j = 0,  \\  
 \frac{155}{4}\cdot2^k-2132 & \text{if  } j = 1.  \\
 \end{cases}
   \end{equation} 
We then deduce from \eqref{eq 2.9} and \eqref{eq 2.12} \\
  \begin{equation}
\label{eq 2.13}
 a _{i}^{(5)}(k)   =   \begin{cases}
  -20\cdot2^{2k}+2960\cdot2^{k}-106752     & \text{if  } i = 0,  \\  
 35\cdot2^{2k}-5490\cdot2^{k}+203872  & \text{if  } i= 1,   \\
 \frac{1}{2}\cdot(-35\cdot2^{2k}+6265\cdot2^{k}-247520) & \text{if  } i= 2,   \\
  \frac{5}{2}\cdot2^{2k}-\frac{2565}{4}\cdot2^{k}+29150   & \text{if  } i= 3,   \\
   \frac{155}{4}\cdot2^{k}-2573  & \text{if  } i= 4.  \\
  \end{cases}
   \end{equation} 

   \section{Study concerning the formula \eqref{eq 1.4} }
  \label{sec 3}  
\subsection{Justification of the formula  \eqref{eq 1.4} in the case n=1}
\label{subsec 3.1}
From the formula  \eqref{eq 1.4} with n=1, i=2 and $ k\geqslant 2 $  using \eqref{eq 2.2} we obtain :\\
\begin{equation*}
 \displaystyle  \Gamma_{2}^{2\times k} =  2^{n}\prod_{j=1}^{j=n}(2^k-2^{2n-j}) = 2\cdot (2^k-2)=2^{k+1}-4.
   \end{equation*}
\subsection{Justification of the formula  \eqref{eq 1.4} in the case n=2}
\label{subsec 3.2}
 From the formula  \eqref{eq 1.4} with n=2, i=4 and $ k\geqslant 4 $  using \eqref{eq 2.3} we obtain :\\
\begin{equation*}
 \displaystyle  \Gamma_{4}^{\left[2\atop 2\right]\times k}  =  2^{n}\prod_{j=1}^{j=n}(2^k-2^{2n-j}) = 2^2\cdot(2^k-2^3)(2^k-2^2)=2^{2k+2} -3\cdot 2^{k+4}+128. 
    \end{equation*}
\subsection{Justification of the formula  \eqref{eq 1.4} in the case n=3}
\label{subsec 2.3}
 From the formula  \eqref{eq 1.4} with n=3, i=6 and $ k\geqslant 6 $  using \eqref{eq 2.6} we obtain :\\
\begin{equation*}
 \displaystyle  \Gamma_{6}^{\left[2\atop {2\atop 2}\right]\times k}  =  2^{n}\prod_{j=1}^{j=n}(2^k-2^{2n-j}) = 2^3\cdot(2^k-2^5)(2^k-2^4)(2^k-2^3)= 
 2^{3k+3} - 7\cdot2^{2k+6} + 7\cdot2^{k+ 10} - 32768. 
    \end{equation*}

   \section{Study concerning the formula \eqref{eq 1.3} }
  \label{sec 4}  
\subsection{Justification of the formula  \eqref{eq 1.3} in the case n=1}
\label{subsec 4.1}
 From the formula  \eqref{eq 1.3} with n=1 and  using \eqref{eq 2.2} we obtain :\\
 \begin{equation*}
\begin{pmatrix}                                                 
b_{0}^{(1)}(1) & 0\\
b_{0}^{(2)}(1) &  b_{1}^{(2)}(1)
\end{pmatrix} =
\begin{pmatrix}
3 & 0 \\
-4 &  2
\end{pmatrix}.
\end{equation*}
\subsection{Justification of the formula  \eqref{eq 1.3} in the case n=2}
\label{subsec 4.2}
From the formula  \eqref{eq 1.3} with n=2 and  using \eqref{eq 2.3} we obtain :\\
 \begin{equation*}
\begin{pmatrix}                                                 
b_{0}^{(1)}(2) & 0 & 0 \\
b_{0}^{(2)}(2) &  b_{1}^{(2)}(2) & 0 \\
b_{0}^{(3)}(2) &  b_{1}^{(3)}(2) & 0 \\
b_{0}^{(4)}(2) &  b_{1}^{(4)}(2) &   b_{2}^{(4)}(2) 
\end{pmatrix} =
\begin{pmatrix}
9 & 0 & 0 \\
30 & 6 & 0 \\
-168 &  42 & 0 \\
128 &  -48 &   4
\end{pmatrix}.
\end{equation*}
\subsection{Justification of the formula  \eqref{eq 1.3} in the case n=3}
\label{subsec 4.3}
From the formula  \eqref{eq 1.3} with n=3 and  using \eqref{eq 2.6} we obtain :\\
 \begin{equation*}
\begin{pmatrix}                                                 
b_{0}^{(1)}(3) & 0 & 0 & 0\\
b_{0}^{(2)}(3) &  b_{1}^{(2)}(3) & 0 & 0\\
b_{0}^{(3)}(3) &  b_{1}^{(3)}(3) & 0 & 0 \\
b_{0}^{(4)}(3) &  b_{1}^{(4)}(3) &   b_{2}^{(4)}(3) & 0\\
b_{0}^{(5)}(3) &  b_{1}^{(5)}(3) &   b_{2}^{(5)}(3) & 0\\
b_{0}^{(6)}(3) &  b_{1}^{(6)}(3) &   b_{2}^{(6)}(3) &   b_{3}^{(6)}(3) \\
\end{pmatrix} =
\begin{pmatrix}
21 & 0 & 0 & 0\\
266 &  14 & 0 & 0\\
1344 &  294 & 0 & 0 \\
-22624 & 2604 &   28 & 0\\
53760 &  -10080 & 420  & 0\\
-32768 & 7168 &  -448 &  8 
\end{pmatrix}.
\end{equation*}
   \section{Study concerning the formula \eqref{eq 1.5} }
  \label{sec 5}  
Obviously we have : \\
       \begin{equation*}
  \displaystyle  \sum_{i=0}^{i=k} \Gamma_{i}^{\left[2\atop{\vdots \atop 2}\right]\times k} =  2^{n(k+1)} \quad \text{if} \quad k< 2n.
   \end{equation*}
   \section{Study concerning the formula \eqref{eq 1.6} }
  \label{sec 6}  
From Lemma 3.3 in Cherly[15] we obtain :\\

\begin{equation}
\label{eq 6.1}
  \displaystyle  a_{i-1}^{(i)}(k)
  = \begin{cases}
  2\cdot2^{k}-25  & \text{if  } i = 2,  \\  
  7\cdot2^{k}-133  & \text{if  } i = 3,  \\ 
 \frac{35}{2} \cdot 2^{k}-605  & \text{if  } i = 4,  \\  
 \frac{155}{4} \cdot 2^{k}-2573  & \text{if  } i = 5,  \\  
 \frac{651}{8} \cdot 2^{k}-10605  & \text{if  } i = 6,  \\  
  \frac{2667}{16} \cdot 2^{k}-43053  & \text{if  } i = 7.
   \end{cases}
\end{equation}
In the following Lemma we compute explicitly $$a_{i-1}^{(i)}(k)$$ for all i and $k > i.$

 \begin{lem}
 \label{lem 6.1}
 We postulate that :\\
 \begin{align}
 \label{eq 6.2}
  \displaystyle a_{i-1}^{(i)}(k)= a(i)\cdot2^{k}+b(i)=  \frac{ \frac{2^{2i-1}-3\cdot2^{i-1}+1}{3}}{2^{i-3}}\cdot2^{k} - \frac{2^{2i+3}-15\cdot2^{i}+7}{3}
 \end{align}
 \end{lem}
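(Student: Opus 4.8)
The plan is to regard $a_{i-1}^{(i)}(k)=a(i)\cdot 2^{k}+b(i)$ as a sequence indexed by $i$ and to prove the two closed forms for the coefficients $a(i)$ and $b(i)$ separately: I would use the tabulated values in \eqref{eq 6.1}, which come from Lemma~3.3 of Cherly[15], as initial data and then propagate them to all $i$ by a first-order recurrence. A preliminary simplification clears the way: distributing the division by $2^{i-3}$ over the three terms of the numerator rewrites the coefficient of $2^{k}$ as $a(i)=\tfrac13\bigl(2^{i+2}+2^{3-i}\bigr)-4$, which exposes the half-integer behaviour of $a(i)$ and shortens every later computation, while $b(i)=-\tfrac13\bigl(2^{2i+3}-15\cdot 2^{i}+7\bigr)$ is already in usable form.

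Next I would extract from the two closed forms the first-order recurrences $a(i+1)=\tfrac12\,a(i)+2^{i+1}-2$ and $b(i+1)=b(i)-2^{2i+3}+5\cdot 2^{i}$, each an elementary algebraic consequence of the formula; for example $a(3)=\tfrac12\cdot 2+2^{3}-2=7$ and $b(3)=-25-2^{7}+5\cdot 4=-133$, matching the second line of \eqref{eq 6.1}. With the base case $i=2$ anchored by the first line of \eqref{eq 6.1}, namely $a(2)=2$ and $b(2)=-25$, a routine induction shows that the explicit formulae are the unique solutions of these recurrences subject to the given initial data, so that \eqref{eq 6.2} reproduces every entry of \eqref{eq 6.1} and, conditionally on the recurrences, holds for all $i$.

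The crux, and the step I expect to be the main obstacle, is to verify that the genuine second-highest coefficient $a_{i-1}^{(i)}(k)$ of $\Gamma_{i}^{[2\cdots 2]\times k}$ in powers of $2^{n}$ — and not merely the proposed formula — actually satisfies these recurrences. This is precisely where the structural content of Lemma~3.3 in Cherly[15] must enter: one has to read off from the rank $i\to i+1$ transition both the halving of the $2^{k}$-coefficient and the affine correction $2^{i+1}-2$, together with the companion recursion for the constant term. Isolating these recurrences from the reference is the delicate point; once it is done the induction closes and \eqref{eq 6.2} follows for every $i$ and every $k>i$. Should Cherly[15] furnish the values only through $i=7$, the statement is then established on that range and retains its postulated status beyond it, in keeping with the announcement character of the paper.
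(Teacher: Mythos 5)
Your proposal is correct (in the same qualified sense as the paper's own argument) and rests on the same basic device: a first-order linear recurrence anchored at the table \eqref{eq 6.1}, but you run it in the opposite direction and with different recurrences. The paper strips the coefficients to the integer numerators $a_j$ and positive constants $b_j$ (so that $a(i)=a_i/2^{i-3}$ and $b(i)=-b_i$), observes on the tabulated values the recurrences $a_j=4\,a_{j-1}+(2^{j-1}-1)$ and $b_j=4\,b_{j-1}+33+40\,(2^{j-3}-1)$, and then \emph{derives} the closed forms by telescoping and summing these recurrences; this is constructive and explains where the formula in \eqref{eq 6.2} comes from. You instead take the closed form as given, extract from it the recurrences $a(i+1)=\tfrac{1}{2}\,a(i)+2^{i+1}-2$ and $b(i+1)=b(i)-2^{2i+3}+5\cdot 2^{i}$ (note these are genuinely different recurrences from the paper's --- after normalization the paper's becomes $a(i)=2\,a(i-1)+4-2^{3-i}$, and the closed-form sequence happens to satisfy both), and \emph{verify} by induction and uniqueness that the closed form is the only solution with $a(2)=2$, $b(2)=-25$; your computations here are accurate, including the simplification $a(i)=\tfrac{1}{3}\bigl(2^{i+2}+2^{3-i}\bigr)-4$. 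What each route buys: yours is shorter and cleaner as a verification but presupposes the formula; the paper's telescoping sum actually produces the formula from the data. Finally, both arguments carry the identical logical gap, which you flag explicitly and the paper does not: the recurrence for the \emph{true} coefficients $a_{i-1}^{(i)}(k)$ is only supported by the values $2\leqslant i\leqslant 7$ coming from Cherly [15], and beyond that range it is postulated rather than proved --- which is precisely why the lemma is phrased as ``we postulate''; so your proposal stands on exactly the same evidential footing as the paper's proof, and your honesty about the crux is, if anything, an improvement in exposition.
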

 \begin{proof}
 Consider \eqref{eq 6.1}.\\
 Set : \\
\begin{equation*}
  \displaystyle a_{j}= 
  \begin{cases}
  1 & \text{if  } j = 2,  \\  
  7  & \text{if  } j = 3,  \\ 
 35 & \text{if  } j = 4,  \\  
 155 & \text{if  } j = 5,  \\  
 651 & \text{if  } j = 6,  \\  
2667   & \text{if  } j = 7.
     \end{cases} \quad  b_{j}= 
  \begin{cases}
  25 & \text{if  } j = 2,  \\  
  133  & \text{if  } j = 3,  \\ 
 605 & \text{if  } j = 4,  \\  
 2573 & \text{if  } j = 5,  \\  
 10605 & \text{if  } j = 6,  \\  
43053   & \text{if  } j = 7.
     \end{cases}
\end{equation*}
We observe that  the sequence $\{a_{j} \mid j\geqslant 3\} $ satisfy the following recurrence relation $ a_{j}= 4\cdot a_{j-1}+(2^{j-1}-1) $
with the initial condition $ a_{2} =1.$\\
Using successively the above recurrent relation we obtain :\\
\begin{align*}
 a_{j} & = 4\cdot a_{j-1}+(2^{j-1}-1) \\
 4\cdot a_{j-1} & = 4^2\cdot a_{j-2}+4\cdot(2^{j-2}-1) \\
  4^2\cdot a_{j-2} & = 4^3\cdot a_{j-3}+4^2\cdot(2^{j-3}-1) \\
  \vdots \\
    4^{j-4}\cdot a_{4} & = 4^{j-3}\cdot a_{3}+4^{j-4}\cdot(2^3-1) \\
      4^{j-3}\cdot a_{3} & = 4^{j-2}\cdot a_{2}+4^{j-3}\cdot(2^2-1) 
\end{align*}
Summing the above equations we obtain :\\
$$ a_{j} =2^{2j-4}\cdot a_{2} +\sum_{l=0}^{l=j-3}2^{2l}\cdot(2^{j-1-l}-1) = \frac{2^{2j-1}-3\cdot2^{j-1}+1}{3}.$$\\
Equally we observe that  the sequence  $\{b_{j} \mid j\geqslant 3\} $ satisfy the following recurrence relation $   b_{j}= 4\cdot b_{j-1}+33+40\cdot(2^{j-3}-1) $
with the initial condition $ b_{2} =25.$\\
Using as before successively the above recurrent relation we obtain :\\
\begin{align*}
 b_{j} & = 4\cdot b_{j-1}+33+40\cdot(2^{j-3}-1) \\
  4\cdot b_{j-1} & = 4^2\cdot b_{j-2}+33\cdot4+ 40\cdot 4\cdot(2^{j-4}-1) \\
  4^2\cdot b_{j-2} & = 4^3\cdot b_{j-3}+33\cdot4^2+ 40\cdot 4^2\cdot(2^{j-5}-1) \\
   \vdots \\
  4^{j-4}\cdot b_{4} & = 4^{j-3}\cdot b_{3}+33\cdot4^{j-4}+40\cdot4^{j-4}\cdot(2^1-1) \\
 4^{j-3}\cdot b_{3} & = 4^{j-2}\cdot b_{2}++33\cdot4^{j-3}+40\cdot4^{j-3}\cdot(2^0-1) 
\end{align*}

Summing the above equations we obtain :\\

$$ b_{j} = 2^{2j-4}\cdot b_{2}+  33\cdot\sum_{l=0}^{l=j-3}4^{l} +40\cdot\sum_{l=0}^{l=j-3}2^{2l}\cdot(2^{j-3-l}-1) =\frac{2^{2j+3}-15\cdot2^{j}+7}{3}.$$
 \end{proof}
\begin{example}

\begin{equation}
\label{eq 6.3}
a_{7}^{(8)}(k)=  \frac{ \frac{2^{15}-3\cdot2^{7}+1}{3}}{2^{5}}\cdot2^{k} - \frac{2^{19}-15\cdot2^{8}+7}{3}=\frac{10795}{32}\cdot 2^{k}-173485.
\end{equation}

\end{example} 
 
  \section{Computation of  $ \Gamma_{8}^{\left[2\atop{\vdots \atop 2}\right]\times k}$ using the formula \eqref{eq 1.4} }
  \label{sec 7}  
We recall (see section \ref{sec 1} ) that $ \Gamma_{8}^{\left[2\atop{\vdots \atop 2}\right]\times k}$ denotes the number of rank 8
n-times persymmetric matrices over  $\mathbf{F}_2 $  of the form \eqref{eq 1.1}.\\
We shall need the following Lemma  : \\
  \begin{lem}
\label{lem 7.1}
\begin{equation}
\label{eq 7.1}
   \Gamma_{8}^{\left[2\atop{\vdots \atop 2}\right]\times k}
   =   \begin{cases}
 0 & \text{if  } n = 0,  \\  
0 & \text{if  } n = 1,   \\
0 & \text{if  } n = 2,\\
0 & \text{if  } n = 3,\\
16\cdot\big[2^{4k}-240\cdot 2^{3k}+17920\cdot2^{2k}-491520\cdot2^{k}+2^{22}\big] & \text{if   } n=4,\\
496\cdot\big[2^{4k}+9525\cdot2^{3k}-2169440\cdot2^{2k}\\+2^{11}\cdot 68115\cdot2^{k}-9749\cdot2^{18}\big] & \text{if   }  n = 5, \\
10416\cdot\big[2^{4k}+29055\cdot2^{3k}+52983280\cdot2^{2k}\\-2^{10}\cdot 10751745\cdot2^{k}+7323814\cdot2^{16}\big]  & \text{if   }  n = 6.
 \end{cases}
   \end{equation} 
    \begin{equation}
\label{eq 7.2}
   \Gamma_{8}^{\left[2\atop{\vdots \atop 2}\right]\times k}=   \begin{cases}
   511\cdot2^{8n} -765\cdot 2^{7n}  -127762\cdot2^{6n}  +440496\cdot2^{5n}+ 8456800\cdot 2^{4n} \\ 
-57511680 \cdot 2^{3n}+118013952\cdot 2^{2n}-83951616\cdot 2^{n} +14680064  & \text{if   } k=9,\\
     511\cdot2^{8n} +171955\cdot2^{7n} -897890\cdot2^{6n}  -38376240\cdot2^{5n}\\
     + 323250144\cdot 2^{4n} + 271514880\cdot 2^{3n}-436135\cdot2^{14} \cdot 2^{2n}\\
      + 242795\cdot2^{16}\cdot2^{n} -4445\cdot2^{21}   & \text{if   } k=10.
   \end{cases}
   \end{equation} 
\end{lem}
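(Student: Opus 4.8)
The plan is to exploit the fact that, by \eqref{eq 1.2} and \eqref{eq 1.3}, the two representations of $\Gamma_8^{\left[2\atop{\vdots \atop 2}\right]\times k}$ can only be compatible if it is a single bivariate polynomial $P(2^k,2^n)$ of bidegree $(4,8)$: degree $4$ in $2^k$ by \eqref{eq 1.3} (since $E(8/2)=4$) and degree $8$ in $2^n$ by \eqref{eq 1.2}. Granting this, both \eqref{eq 7.1} and \eqref{eq 7.2} are merely two families of slices of the one polynomial: \eqref{eq 7.1} fixes $n\in\{0,1,\dots,6\}$ and reads off a polynomial in $2^k$, while \eqref{eq 7.2} fixes $k\in\{9,10\}$ and reads off a polynomial in $2^n$. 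So the whole lemma reduces to determining $P$ once and substituting.

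I would write $P$ in the factored shape of the second line of \eqref{eq 1.2} with $i=8$, where $E\!\left(\frac{i-1}{2}\right)=3$:
\[
\Gamma_8^{\left[2\atop{\vdots \atop 2}\right]\times k}=(2^{n}-1)(2^{n}-2)(2^{n}-4)(2^{n}-8)\Big[(2^{9}-1)\,2^{4n}+\sum_{j=0}^{3}\alpha_{j}^{(8)}(k)\,2^{jn}\Big].
\]
The prefactor vanishes at $2^{n}\in\{1,2,4,8\}$, i.e. at $n=0,1,2,3$, which instantly yields the first four lines of \eqref{eq 7.1}; this is precisely the rank bound $\le 2n<8$, so no separate argument is needed. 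The case $n=4$ is handled directly by \eqref{eq 1.4}: since $8=2n$ is the maximal rank, $\Gamma_8=16\prod_{j=1}^{4}(2^{k}-2^{8-j})$, whose expansion is exactly the $n=4$ line. It then remains only to pin down the four coefficient functions $\alpha_0^{(8)},\dots,\alpha_3^{(8)}$.

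For these I would first record the degree pattern seen at lower rank (compare \eqref{eq 2.12}), namely $\deg_{2^k}\alpha_j^{(8)}=4-j$, so that $\alpha_3$ is linear, $\alpha_2$ quadratic, $\alpha_1$ cubic and $\alpha_0$ quartic in $2^{k}$, a total of $2+3+4+5=14$ unknown rationals. The leading one is free: Lemma~\ref{lem 6.1} with \eqref{eq 6.3} gives $a_7^{(8)}(k)=\tfrac{10795}{32}2^{k}-173485$, and matching the coefficient of $2^{7n}$ in the factored form (using $(2^{n}-1)(2^{n}-2)(2^{n}-4)(2^{n}-8)=2^{4n}-15\cdot2^{3n}+70\cdot2^{2n}-120\cdot2^{n}+64$) yields $\alpha_3^{(8)}(k)=a_7^{(8)}(k)+15(2^{9}-1)$. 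The remaining $12$ unknowns I would fix by marshalling enough independent exact values of $\Gamma_8$: first the maximal-rank identity, i.e. the requirement that the factored form evaluated at $n=4$ reproduce \eqref{eq 1.4} coefficient by coefficient in $2^{k}$; and then externally known special values of the rank-$8$ enumerator for small $n$ (in the spirit of \eqref{eq 2.10}, which supplied the extra equation \eqref{eq 2.11} in the rank-$5$ computation) together with the complement relation \eqref{eq 1.5} for small $k$.

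Once all $\alpha_j^{(8)}(k)$ are known, \eqref{eq 7.1} follows by substituting $n=5$ and $n=6$ and expanding, and \eqref{eq 7.2} follows by substituting $k=9$ and $k=10$ and regrouping by powers of $2^{n}$ (equivalently, by reading off the induced coefficients $a_j^{(8)}(9)$ and $a_j^{(8)}(10)$ in \eqref{eq 1.2}). The main obstacle is the determination of $\alpha_0^{(8)},\alpha_1^{(8)},\alpha_2^{(8)}$: Lemma~\ref{lem 6.1} delivers only the leading function $\alpha_3^{(8)}$, so the other twelve coefficients must be recovered from the auxiliary data, and the delicate point is to verify that this data is both sufficient to determine them and mutually consistent. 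Since the resulting linear system is over-determined, that consistency is a genuine test of the conjecture rather than a formality.
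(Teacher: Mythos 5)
There is a genuine gap, and it is structural rather than computational: your derivation is circular relative to what Lemma~\ref{lem 7.1} is for. In the paper, Lemma~\ref{lem 7.1} is not derived from the conjecture at all --- its proof is a one-line citation to Cherly [12]--[16], where the rank enumerations for quadruple, quintuple and sextuple persymmetric matrices (the $n=4,5,6$ lines of \eqref{eq 7.1}) and for the $2N\times 9$ and $2N\times 10$ cases (the two lines of \eqref{eq 7.2}) are actually carried out by independent means. The lemma is imported raw data. The logical flow in Section~\ref{sec 7} is then: \eqref{eq 7.1} with $n=4,5,6$, together with $\alpha_3^{(8)}$ from Lemma~\ref{lem 6.1}, feeds the linear system \eqref{eq 7.7}--\eqref{eq 7.9} that determines $\alpha_0^{(8)},\alpha_1^{(8)},\alpha_2^{(8)}$ in the proof of Lemma~\ref{lem 7.2}; and \eqref{eq 7.2} is then recovered from Lemma~\ref{lem 7.2} only as an a posteriori consistency check (the Example following it). Your proposal runs this pipeline backwards: to pin down the twelve unknown coefficients of $\alpha_0^{(8)},\alpha_1^{(8)},\alpha_2^{(8)}$ you appeal to ``externally known special values of the rank-$8$ enumerator for small $n$'' --- but those values are precisely the $n=4,5,6$ entries of \eqref{eq 7.1}, i.e.\ the statement under proof. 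The other auxiliary data you invoke does not close the gap: matching the factored form against \eqref{eq 1.4} at $n=4$ yields only five equations (one per power of $2^k$), leaving the system underdetermined, and the complement relation \eqref{eq 1.5} concerns $\Gamma_k$ for $k<2n$, so it produces a value of $\Gamma_8$ only if one already knows all lower-rank enumerations at $k=8$ for general $n$, which the paper does not establish.

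There is also an epistemic inversion. Formulas \eqref{eq 1.2}, \eqref{eq 1.3}, \eqref{eq 1.4} are postulates in this paper (the word used is ``we postulate''), not theorems; Lemma~\ref{lem 7.1} is meant to be independent evidence against which that conjecture is tested. A proof of Lemma~\ref{lem 7.1} that assumes the conjectural shape $\Gamma_8=(2^n-1)(2^n-2)(2^n-4)(2^n-8)\bigl[511\cdot 2^{4n}+\sum_{j=0}^{3}\alpha_j^{(8)}(k)2^{jn}\bigr]$ therefore proves nothing, even if the linear algebra were to close. The parts of your argument that are sound are exactly the parts that do not need the conjecture: the vanishing for $n\leqslant 3$ follows from the elementary rank bound $\operatorname{rank}\leqslant 2n<8$, and the $n=4$ line follows from the maximal-rank count $2^4\prod_{j=1}^{4}(2^k-2^{8-j})$, which indeed expands to $16\bigl[2^{4k}-240\cdot 2^{3k}+17920\cdot 2^{2k}-491520\cdot 2^k+2^{22}\bigr]$. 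But the $n=5$, $n=6$, $k=9$ and $k=10$ entries cannot be obtained this way; within this paper they can only be quoted from the prior enumeration papers, which is exactly what the author does.
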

\begin{proof}
Lemma \ref{lem 7.1} follows from Cherly [12,13,14,15 and 16 ]
\end{proof}
  \begin{lem}
  \label{lem 7.2}
We postulate that :\\
\begin{align*}
  \displaystyle  \Gamma_{8}^{\left[2\atop{\vdots \atop 2}\right]\times k}  =
   511 \cdot 2^{8n} + \big[ \frac{10795}{32}\cdot 2^k-173485\big]   \cdot2^{7n}  \\
    \displaystyle   + \big[  \frac{3937}{2^7}\cdot2^{2k}-\frac{1559941}{2^5}\cdot2^k+16768318 \big]    \cdot 2^{6n}\\ 
      \displaystyle  +\big[  \frac{31}{2^6}\cdot2^{3k}-\frac{261919}{2^7}\cdot2^{2k}+\frac{34854113}{16}\cdot2^{k}-643492720 \big]  \cdot 2^{5n} \\
      \displaystyle +\big[ \frac{1}{315\cdot 2^2}\cdot 2^{4k} -\frac{20437}{21\cdot 2^6}\cdot 2^{3k} +\frac{25012451}{9\cdot2^6}\cdot2^{2k} \\
-\frac{3341482313}{84}\cdot2^{k}+7289277664+\frac{35464937\cdot2^{15}}{315} \big]  \cdot2^{4n} \\
  \displaystyle  +\big[    -\frac{1}{21\cdot2^2}\cdot2^{4k}+\frac{102825}{21\cdot2^5}\cdot2^{3k}-\frac{126707455}{21\cdot2^4}\cdot2^{2k} \\ +(110225510+\frac{5819785}{7}\cdot2^{8})\cdot2^{k}-\frac{7081677751\cdot2^{8}}{21}  \big]  \cdot2^{3n} \\
  \displaystyle  +\big[    \frac{1}{18}\cdot2^{4k}-\frac{14735}{24}\cdot2^{3k}+\frac{25506523}{18}\cdot2^{2k}\\
   -\frac{55123739\cdot2^6}{3}\cdot2^{k}  +\frac{169923845\cdot2^{14}}{9}  \big]  \cdot 2^{2n} \\
 \displaystyle  +\big[  -\frac{2}{21}\cdot2^{4k}+\frac{20661}{21}\cdot2^{3k}-\frac{6603656}{3}\cdot2^{2k}\\
       +\frac{24591157\cdot2^{9}}{7}\cdot2^{k}-\frac{150434993\cdot2^{16}}{21}  \big]  \cdot 2^{n}\\
   \displaystyle  +   \frac{16}{315}\cdot\big[2^{4k}-10005\cdot2^{3k}+22047760\cdot2^{2k}\\
 -17459355\cdot2^{10}\cdot2^{k}+35464937\cdot2^{17}\big]\\
  \displaystyle   = \frac{2^{4k}}{1260}\cdot\big[2^{4n}-15\cdot 2^{3n}+70 \cdot 2^{2n}-120\cdot 2^{n}+64\big] \\
  \displaystyle   + \frac{2^{3k}}{21\cdot2^{6}}\cdot\big[2^{4n}-15\cdot 2^{3n}+70 \cdot 2^{2n}-120\cdot 2^{n}+64\big] \cdot\big[651\cdot2^{n}-10672\big]\\
  \displaystyle   + \frac{2^{2k}}{315\cdot2^{7}}\cdot\big[2^{4n}-15\cdot 2^{3n}+70 \cdot 2^{2n}-120\cdot 2^{n}+64\big]\\
    \cdot\big[1240155\cdot2^{2n}-63902160\cdot2^{n}+22047760\cdot2^{5}\big]\\
   \displaystyle + \frac{2^{k}}{21\cdot2^{5}}\cdot\big[2^{4n}-15\cdot 2^{3n}+70 \cdot 2^{2n}-120\cdot 2^{n}+64\big] \cdot\big[226695\cdot2^{3n}
      -29358336\cdot2^{2n}\\
      +1007629056\cdot2^{n}-1163957\cdot2^{13}\big]\\
    \displaystyle  + \frac{1}{315}\big[2^{4n}-15\cdot 2^{3n}+70 \cdot 2^{2n}-120\cdot 2^{n}+64\big] \cdot \big[160965\cdot 2^{4n} -52233300\cdot2^{3n}\\+4487253120\cdot2^{2n}-131715763200\cdot2^{n}+1162115055616  \big]
     \quad \text{for} \quad k\geqslant 9. \\
 \end{align*}
\end{lem}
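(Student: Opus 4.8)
The plan is to regard $\Gamma_{8}$ as a polynomial in the two quantities $X=2^{n}$ and $Y=2^{k}$ and to recover it from its degree constraints together with the special values recorded in Lemma~\ref{lem 7.1}. The two structural formulas fix the shape: \eqref{eq 1.2} with $i=8$ gives
\[
\Gamma_{8}=511\,X^{8}+\sum_{j=0}^{7}a_{j}^{(8)}(k)\,X^{j},
\]
so $\deg_{X}\Gamma_{8}=8$ with constant leading coefficient $511=2^{9}-1$, while \eqref{eq 1.3} with $i=8$ (here $E(\tfrac{8}{2})=4$) gives $\deg_{Y}\Gamma_{8}\leqslant 4$. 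Thus $\Gamma_{8}\in\mathbb{Q}[X,Y]$ with $\deg_{X}\leqslant 8$ and $\deg_{Y}\leqslant 4$.

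First I would extract the rank-obstruction factor. For $n=0,1,2,3$ one has $2n<8$, so no matrix \eqref{eq 1.1} can have rank $8$ and $\Gamma_{8}\equiv 0$ in $k$; this is the content of the first four lines of \eqref{eq 7.1}. Hence $X=1,2,4,8$ are roots of $\Gamma_{8}$ as a polynomial in $X$, so
\[
P(X):=(X-1)(X-2)(X-4)(X-8)=X^{4}-15X^{3}+70X^{2}-120X+64
\]
divides $\Gamma_{8}$. Writing $\Gamma_{8}=P(X)\,Q(X,Y)$ gives $\deg_{X}Q\leqslant 4$ and $\deg_{Y}Q\leqslant 4$. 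This $P(X)=\prod_{l=0}^{3}(2^{n}-2^{l})$ is exactly the prefactor predicted by the factored line of \eqref{eq 1.2} (with $E(\tfrac{7}{2})=3$) and is the common bracket $2^{4n}-15\cdot2^{3n}+70\cdot2^{2n}-120\cdot2^{n}+64$ occurring in every term of the second display.

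Next I would determine $Q=\sum_{l=0}^{4}c_{l}(X)\,Y^{l}$. Matching the $X^{8}$ coefficient of $P\,Q$ to the constant $511$ forces the top coefficient of $Q$ in $X$ to be the pure constant $511$; more generally, \eqref{eq 1.2} together with \eqref{eq 1.6} (and its analogues for the lower $a_{j}^{(8)}$) bounds the $Y$-degree of each coefficient, yielding the triangular shape $\deg_{X}c_{l}=4-l$ and leaving only $5+4+3+2+1=15$ rationals to find. The data of Lemma~\ref{lem 7.1} then over-determines these: dividing the cases $n=4,5,6$ of \eqref{eq 7.1} by $P(16),P(32),P(64)$ gives $Q(16,Y),Q(32,Y),Q(64,Y)$ — in particular the three values of the $2^{4k}$ coefficient all equal $1/1260$, fixing $c_{4}$ and, with the triangular constraint, $c_{3}$ and $c_{2}$; dividing the cases $k=9,10$ of \eqref{eq 7.2} by $P(X)$ gives $Q(X,2^{9})$ and $Q(X,2^{10})$, a $2\times2$ linear system that yields $c_{1}$ and $c_{0}$. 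Solving produces the bracketed polynomials of the second display, and the remaining equations serve as consistency checks.

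Finally I would reconcile the two displayed forms by expanding $P(X)\,Q(X,Y)$ and collecting by powers of $X=2^{n}$: the coefficient of $X^{j}$ must reproduce the postulated $a_{j}^{(8)}(k)$, and in particular the $X^{7}$ coefficient should recover $a_{7}^{(8)}(k)=\frac{10795}{32}2^{k}-173485$ of \eqref{eq 6.3}, an independent check. The conceptual content is light; the main obstacle is twofold — rigorously justifying the triangular degree structure (the $Y$-degree bounds on the $a_{j}^{(8)}$ beyond the top one supplied by Lemma~\ref{lem 6.1}), and then carrying the linear algebra over $\mathbb{Q}$ with the unwieldy denominators ($1260$, $315$, $21\cdot2^{6}$, \dots) without arithmetic slips, while verifying that the over-determined data of Lemma~\ref{lem 7.1} is genuinely consistent.
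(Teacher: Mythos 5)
Your reconstruction is genuinely different from the paper's, and it hinges on one assumption that the paper's own route is organized precisely to avoid. The paper's proof runs as follows: (i) it writes $\Gamma_{8}$ in the factored form \eqref{eq 7.3} coming from the second display of \eqref{eq 1.2}, with four unknown functions $\alpha_{0}^{(8)}(k),\dots,\alpha_{3}^{(8)}(k)$; (ii) expanding the product gives \eqref{eq 7.4}, in particular $a_{7}^{(8)}(k)=\alpha_{3}^{(8)}(k)-7665$, so that Lemma \ref{lem 6.1}, through \eqref{eq 6.3}, pins down $\alpha_{3}^{(8)}(k)$ exactly --- it is consumed as \emph{input}, not kept as a check; (iii) for each fixed $k$ the three cases $n=4,5,6$ of \eqref{eq 7.1} are substituted into \eqref{eq 7.6}, producing the Vandermonde-type $3\times 3$ system \eqref{eq 7.7}--\eqref{eq 7.9} in the three remaining unknowns, solved in \eqref{eq 7.10}; (iv) back-substitution into \eqref{eq 7.4} yields the Lemma. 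Because that system is solved pointwise in $k$, no hypothesis whatsoever on how the $\alpha_{j}^{(8)}$ depend on $2^{k}$ is required, and \eqref{eq 7.2} is never used: it is rederived afterwards (the Example following the proof) as a consistency check. You invert these roles: you consume \eqref{eq 7.2} ($k=9,10$) as data, keep \eqref{eq 6.3} as the check, and close the count of unknowns by imposing the triangular bidegree condition $\deg_{X}c_{l}=4-l$ on $Q=\sum_{l=0}^{4}c_{l}(X)Y^{l}$, where $X=2^{n}$, $Y=2^{k}$, $Q=\Gamma_{8}/P(X)$.

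That triangular condition is the genuine gap, and you rightly flag it, but it is not a technicality one can wave at: it is not implied by the paper's postulates. Equations \eqref{eq 1.2} and \eqref{eq 1.3} only give $\deg_{X}\Gamma_{8}=8$ with leading coefficient $511$, $\deg_{Y}\Gamma_{8}\leqslant 4$, and the factor $P(X)$; and \eqref{eq 1.6} controls only the single top coefficient $a_{7}^{(8)}$. Without the triangular constraint your chosen data do not determine $Q$: the $c_{l}$ carry up to $25$ unknown rational coefficients, while $n=4,5,6$ supply $15$ linear conditions and $k=9,10$ supply $10$, of which $6$ are redundant (each value $Q(2^{n},2^{k})$ with $n\in\{4,5,6\}$, $k\in\{9,10\}$ is counted by both families), so the system has rank at most $19$ and is underdetermined. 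The triangular structure is in fact true --- one reads it off from the paper's solution \eqref{eq 7.10} --- but in your argument it is an unproved assumption doing essential work, exactly the work the paper instead extracts from Lemma \ref{lem 6.1}. The economical repair is to do what the paper does: feed \eqref{eq 6.3} in as input, i.e.\ $\alpha_{3}^{(8)}(k)=a_{7}^{(8)}(k)+7665$; then for each fixed $k$ only three unknowns remain, the $n=4,5,6$ data suffice, no degree structure in $Y$ need be postulated, and \eqref{eq 7.2} is freed up to serve as the a posteriori check.
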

\begin{proof}
From the formula \eqref{eq 1.4} with i=8 we get \\
\begin{align}
\label{eq 7.3}
\Gamma_{8}^{\left[2\atop{\vdots \atop 2}\right]\times k}
 =  511\cdot 2^{8n} + a_{0}^{(8)}(k)+a_{1}^{(8)}(k)\cdot2^{n}+a_{2}^{(8)}(k)\cdot2^{2n}
 +a_{3}^{(8)}(k)\cdot2^{3n}+a_{4}^{(8)}(k)\cdot2^{4n}\\
 +a_{5}^{(8)}(k)\cdot2^{5n}+a_{6}^{(8)}(k)\cdot2^{6n}+a_{7}^{(8)}(k)\cdot2^{7n}\nonumber \\
=(2^n-1)(2^n-2)(2^n-2^2)(2^n-2^3)\nonumber
\cdot[ 511\cdot2^{4n} + \alpha_{3}^{(8)}(k) \cdot 2^{3n}+ \alpha_{2}^{(8)}(k)\cdot 2^{2n}+ \alpha_{1}^{(8)}(k) \cdot 2^{n}+ \alpha_{0}^{(8)}(k)]. \nonumber
 \end{align}
 From \eqref{eq 7.3} we deduce :\\
  \begin{equation}
  \label{eq 7.4}
   \displaystyle a_{j}^{(8)}(k)= 
   \begin{cases} 
 \displaystyle \alpha_{3}^{(8)}(k)   -7665  & \text{if   } j=7,\\
  \displaystyle   -15\cdot \alpha_{3}^{(8)}(k) + \alpha_{2}^{(8)}(k) +35770  & \text{if   } j=6,\\
   \displaystyle 70\cdot  \alpha_{3}^{(8)}(k) -15\cdot  \alpha_{2}^{(8)}(k) +  \alpha_{1}^{(8)}(k) -61320  & \text{if   } j=5,\\
   \displaystyle  -120\cdot  \alpha_{3}^{(8)}(k) +70\cdot  \alpha_{2}^{(8)}(k) -15\cdot \alpha_{1}^{(8)}(k) + \alpha_{0}^{(8)}(k) +32704  & \text{if   } j=4,\\
    \displaystyle 64\cdot  \alpha_{3}^{(8)}(k) -120\cdot  \alpha_{2}^{(8)}(k) +70\cdot \alpha_{1}^{(8)}(k) -15\cdot \alpha_{0}^{(8)}(k)   & \text{if   } j=3, \\
     \displaystyle 64\cdot  \alpha_{2}^{(8)}(k)  -120 \cdot \alpha_{1}^{(8)}(k)  +70\cdot \alpha_{0}^{(8)}(k) & \text{if   } j=2,\\ 
 \displaystyle  64 \cdot  \alpha_{1}^{(8)}(k)  -120\cdot \alpha_{0}^{(8)}(k)  & \text{if   } j=1,\\ 
 \displaystyle  64\cdot   \alpha_{0}^{(8)}(k)    & \text{if   } j=0.
\end{cases}
    \end{equation}
From \eqref{eq 6.3} and \eqref{eq 7.4} we get :\\
\begin{equation}
\label{eq 7.5} 
\displaystyle  \alpha_{3}^{(8)}(k)  = a_{7}^{(8)}(k)+7665= \frac{10795}{32}\cdot 2^{k}-173485+7665=\frac{10795}{32}\cdot 2^{k}-165820.
\end{equation}
From \eqref{eq 7.3} using \eqref{eq 7.5} we obtain :\\
\begin{align}
\label{eq 7.6}
  \alpha_{2}^{(8)}(k)\cdot 2^{2n}+ \alpha_{1}^{(8)}(k) \cdot 2^{n}+ \alpha_{0}^{(8)}(k)\\
 =\frac{1}{(2^n-1)(2^n-2)(2^n-2^2)(2^n-2^3)}\cdot \Gamma_{8}^{\left[2\atop{\vdots \atop 2}\right]\times k} 
  - 511\cdot2^{4n} - (\frac{10795}{32}\cdot 2^{k}-165820) \cdot 2^{3n}. \nonumber
\end{align}

Combining \eqref{eq 7.6} and \eqref{eq 7.1} with n=4 we deduce :\\
\begin{align}
\label{eq 7.7}
256\cdot \alpha_{2}^{(8)}(k)+16\cdot \alpha_{1}^{(8)}(k)+ \alpha_{0}^{(8)}(k)\\
=\frac{1}{1260}\cdot\big[2^{4k}-240\cdot 2^{3k}+17920\cdot2^{2k}-491520\cdot2^{k}+2^{22}\big] 
  - 2^{7}\cdot 10795\cdot 2^k+645709824.   \nonumber
\end{align}
 Combining \eqref{eq 7.6} and \eqref{eq 7.1} with n=5 we deduce :\\

 \begin{align}
\label{eq 7.8}
1024\cdot \alpha_{2}^{(8)}(k)+32\cdot \alpha_{1}^{(8)}(k)+ \alpha_{0}^{(8)}(k)\\
 =\frac{1}{1260}\cdot\big[2^{4k}+9525\cdot2^{3k}-2169440\cdot2^{2k}+2^{11}\cdot 68115\cdot2^{k}-9749\cdot2^{18}\big] \nonumber  \\    -10795\cdot2^{10}\cdot 2^k+4897767424. \nonumber
\end{align}
 Combining \eqref{eq 7.6} and \eqref{eq 7.1} with n=6 we deduce :\\
  \begin{align}
\label{eq 7.9}
4096\cdot \alpha_{2}^{(8)}(k)+64\cdot \alpha_{1}^{(8)}(k)+ \alpha_{0}^{(8)}(k)\\
\frac{1}{1260}\cdot \big[2^{4k}+29055\cdot2^{3k}+52983280\cdot2^{2k}-2^{10}\cdot 10751745\cdot2^{k}+7323814\cdot2^{16}\big] \nonumber\\
       -10795\cdot2^{13}\cdot2^k+33279\cdot2^{20}. \nonumber
\end{align}

 Using \eqref{eq 7.5}, \eqref{eq 7.7}, \eqref{eq 7.8} and \eqref{eq 7.9} we get :\\
   \begin{equation}
  \label{eq 7.10}
   \displaystyle \alpha_{j}^{(8)}(k) = 
   \begin{cases} 
 \displaystyle  \frac{1}{1260}\cdot \big[2^{4k}-10005\cdot2^{3k}+22047760\cdot2^{2k}-17459355\cdot2^{10}\cdot2^{k}+35464937\cdot2^{17}\big] & \text{if   } j=0,\\
 \\
  \displaystyle  \frac{31}{2^6}\cdot2^{3k}-\frac{12679}{2^3}\cdot2^{2k}+1499448\cdot2^{k}-408345\cdot2^{10}    & \text{if   } j=1,\\
  \\
    \displaystyle \frac{3937}{2^7}\cdot2^{2k}-43688\cdot2^k+222582\cdot2^6  & \text{if   } j=2,\\
    \\
    \displaystyle  \frac{10795}{32}\cdot 2^k-165820 & \text{if   } j=3.\\
   \end{cases}
    \end{equation}
    
  From \eqref{eq 7.4} and  \eqref{eq 7.10} we prove Lemma  \ref{lem 7.2}.  
\end{proof} 
  \begin{example}
 Applying  Lemma  \ref{lem 7.2} with k respectively equal to 9 and 10 we deduce \eqref{eq 7.2}.

\end{example}

\newpage

\end{document}